\documentclass[12pt]{article}

%%%%%%%%%%%%%%%%--PAGE_LAYOUT-- %%%%%%%%%%%%%%%%%%%
\usepackage[a4paper]{geometry}
%\raggedbottom
%\setlength{\oddsidemargin}{20mm}
%\setlength{\evensidemargin}{1mm}
\usepackage{cite}
%%%%%%%%%%%%%%%%%%%%%%%%%%%%%%%%%%%%%%%%%%%%%%%%%%%%
\usepackage[utf8]{inputenc}
%%%%%%%%%%%%%%%%%%--MATH_PACKAGE--%%%%%%%%%%%%%%%%%%%
\usepackage{amssymb}
\usepackage{amsmath}
\usepackage{amsthm}
\usepackage{empheq}
\usepackage{relsize}
\usepackage{siunitx}
%%%%%%%%%%%%%%%%%%%%%%%%%%%%%%%%%%%%%%%%%%%%%%%%%%%%
\allowdisplaybreaks 

%%%%%%%%%%%%%%%--GRAPHICS--%%%%%%%%%%%%%%%%%%%%%%%%%

\usepackage{titlesec}
\usepackage{pgfplots}

\usepgfplotslibrary{fillbetween}
\pgfplotsset{samples=100}
%%%%%%%%%%%%%%%%%%%%%%%%%%%%%%%%%%%%%%%%%%%%%%%%%%%%

%%%%%%%%%%%%%%-CALIGRAPHIC LETTERS-%%%%%%%%%%%%%%%%%%
\usepackage{calrsfs}
\DeclareMathAlphabet{\pazocal}{OMS}{zplm}{m}{n}
%%%%%%%%%%%%%%%%%%%%%%%%%%%%%%%%%%%%%%%%%%%%%%%%%%%%%

%%%%%%%%%%%%%%%--OTHER-PACKAGES--%%%%%%%%%%%%%%%%%%%%
\usepackage{enumerate}
\usepackage{hyperref}
\usepackage{titlesec}
%%%%%%%%%%%--THEOREM_ENVIRONMENT--%%%%%%%%%%%%%%%%%%
\theoremstyle{plain}

\newtheorem{thm}{Theorem}[section]

\newtheorem{remark}{Remark}[section]
%%%%%%%%%%%%%%%%%%%%%%%%%%%%%%%%%%%%%%%%%%%%%%%%%%%%%

%%%%%%%%%%%%%%%%--SHORTCUTS--%%%%%%%%%%%%%%%%%%%%%%%%
%\def\cos{\mathop{\rm cos}\nolimits}
%\def\R{\mbox{$I\!\!R$}}
%\def\I{\mbox{$I\!\!I$}}

%\def\N{\mbox{$I\!\!N$}}
%\def\Nn{\mbox{\scriptsize$I\!\!N$}}

\newcommand{\tb}{\textcolor{blue}}
%%%%%%%%%%%%%%--NO_BREAK-- %%%%%%%%%%%%%%%%%%%%%%%%%%%%%%%%%%%
\hyphenation{Gerstner}
\hyphenation{Lagrangian}

\title{Sections and Chapters}
\numberwithin{equation}{section}   
%%%%%%%%%%%%-CUSTOMIZE-CHAPTER--%%%%%%%%%%%%%%%%%%%%%%%%%%%%%%%
\titleformat{\chapter}[display]
  {\bfseries\Large}
  {\filleft\MakeUppercase{\chaptertitlename} \Huge\thechapter}
  {1ex}
  {\titlerule\vspace{1ex}\filright}
  [\vspace{1ex}\titlerule]
%%%%%%%%%%%%%%%%%%%%%%%%%%%%%%%%%%%%%%%%%%%%%%%%%%%%%%%%%%%%%%%

%%%%%%%%%%%%%--TITLE-SETUP--%%%%%%%%%%%%%%%%%%%%%%%%%%%%%%%%%%%%%%%%%%%%%%%%%%%%
\title{Liouville-type results for time-dependent stratified water flows over variable bottom in the $\beta$-plane approximation}
\author{Calin Iulian Martin$^\ast$ \\
\textit{Faculty of Mathematics, University of Vienna,}\\ 
\textit{Oskar-Morgenstern-Platz 1, 1090, Vienna, Austria}\\
{\small $^\ast$Email address: calin.martin@univie.ac.at}\\
}

\date{}

%%%%%%%%%%%%%%%%%%%--DOCUMENT_BEGINNING--%%%%%%%%%%%%%%%%%%%%%%%%%%%%%%%%%%%%%%%%%%%%%%%%%%%%%%%%%%%%%%
%%%%%%%%%%%%%%%%%%%%%%%%%%%%%%%%%%%%%%%%%%%%%%%%%%%%%%%%%%%%%%%%%%%%%%%%%%%%%%%%%%%%%%%%%%%%%%%%%%%%%%%
\begin{document}
	
\maketitle	

\def\simbolo#1#2{#1\dotfill{#2}}
\def\qed{\hbox to 0pt{}\hfill$\rlap{$\sqcap$}\sqcup$\medbreak}
\def\theequation{\arabic{section}.\arabic{equation}}
\def\thesection {\arabic{section}}

\begin{abstract}
We consider here time-dependent three-dimensional stratified geophysical water flows of finite depth over a variable bottom with a free surface and an interface (separating two layers of constant and different densities).
Under the assumption that the vorticity vectors in the two layers are constant, we prove that bounded solutions to the three-dimensional water waves equations in the $\beta$-plane approximation exist if and only if one of the horizontal components of the velocity, as well as its vertical component, are zero; the other horizontal component being constant. Moreover, the interface is flat, the free surface has a traveling character in the horizontal direction of the nonvanishing velocity component, being of general type in the other horizontal direction, and the pressure is hydrostatic in both layers. Unlike previous studies of three-dimensional flows with constant vorticity in each layer, we consider a non-flat bottom boundary and different constant vorticity vectors for the upper and lower layer.
\end{abstract}

\maketitle

\noindent
{\bf Keywords}: Time-dependent thee-dimensional gravity water flows, stratification, $\beta$-plane effects, variable bottom, piecewise constant vorticity.
\bigbreak

\noindent
{\bf Mathematics Subject Classification:} 35A01, 35Q35, 35R35, 76B15, 76B70.
\bigbreak

\section{Introduction}

Geophysical fluid dynamics (GFD) is the study of fluid motion characterized by the incorporation of Coriolis effects in the governing equations. The Coriolis force is a result of the Earth's rotation and plays a substantial role in the resulting dynamics. While the nonlinear GFD equations are able to capture a plethora of oceanic and atmospheric flows \cite{CJGAFD, CJaz, CJazAcc, cj1, cj2, cj3, cj4} their high level of difficulty greatly challenges the available mathematical techniques. 

The possible course of action is the recourse to simpler approximate models that are justified by oceanographical considerations. One of these approximations refers to the linearization of Coriolis forces in the tangent plane approximation, a procedure that-despite the spherical shape of the Earth-is valid due to the moderate spatial scale of the motion: the region occupied by the fluid can be approximated by a tangent plane and the linear term of the Taylor expansion captures the $\beta$-plane effect, cf. the discussions in Constantin \cite{CoJGeoR}, Cushman-Roisin \& Beckers \cite{Cush}, Gill \cite{Gill}, Pedlosky \cite{Ped}, Salmon \cite{Sal}. The paper \cite{CoJGeoR}, by Constantin, presented for the first time three-dimensional explicit and exact solutions (in Lagrangian coordinates) to the equatorial $\beta$-plane model. These solutions described equatorially trapped waves propagating eastward in a stratified inviscid flow.
 The latter $\beta$-plane model was modified to incorporate centripetal terms by Constantin \& Johnson \cite{CJaz} whereby the authors also established the existence of equatorial purely azimuthal solutions to the GFD equations in spherical, cylindrical and $\beta$-plane coordinates. A further significant extension was realized by Henry \cite{HenJFM16} who presented an exact and explicit solution (to the GFD equations in the $\beta$-plane approximation with Coriolis and centripetal forces) representing equatorially trapped waves propagating in the presence of a constant background current. Yet, another improvement in the realm of the GFD equations in the $\beta$-plane was obtained by Henry \cite{HenJDE17} and concerns the addition of a gravitational-correction term in the tangent plane approximation. 
Historically, this type of approximation was proposed by Rossby {\emph et al.} \cite{Ros} as a conceptual model for motion on a sphere. 
Recent results concerning flows in the $\beta$-plane approximation were obtained in \cite{Davi, Dur}. For subsequent solutions to the GFD equations concerning a variety of geophysical scenarios, we refer the reader to \cite{BasDSR, CoPhysOc2013, CoPhysOc2014, CJPhysFluids2017, CJJPO2019, HenMarARMA, dikNA, dikJMFM15, MarPoF21, MartinOc, AMJPhA, AMApplAna, AMQAM}.

In the quest to derive explicit and exact solutions to the GFD equations in the $\beta$-plane approximation with Coriolis and centripetal terms, we start from the assumption that the vorticity vector is constant in each of the two layers of 
the fluid domain which is assumed to be stratified: the water flow, bounded below by a bottom (varying) boundary and above by the free surface, is split by an interface into a layer adjacent to the bottom of some constant 
density (say $\rho$) which sits below the layer adjacent to the free surface of constant density $\tilde{\rho}<\rho$. Stratification is an important aspect in the ocean science: stratified layers act as a barrier to the mixing of water, which impacts the exchange of heat, carbon, oxygen and other nutrients, cf. \cite{Li}. The discontinuous stratification (of the type we consider here) gives rise to internal waves, an aspect that has attracted much attention lately from the perspective of exact solutions describing large-scale geophysical (and non-geophysical) flows \cite{Bas, CJGAFD, CJaz, CJazAcc, CJ_Oc, CICMP, EMM, EKLM, GeyQuiDCDS, GeyQuiCPAA, HenMatPisa, HenMarARMA, MarPoF21} or of qualitative studies of intricate features underlying the dynamics of coupled surface and internal waves, cf. \cite{HenVillJDE}.

The structural consequences of constant vorticity in water flows satisfying the three-dimensional equations were discussed in a handful of papers by Constantin \& Kartashova \cite{CK},  Constantin \cite{CEjmb}, Craig \cite{Cra}, Wahl\'{e}n \cite{Wah}, Stuhlmeier \cite{Stu}, Martin \cite{MarJmfm17, MarPoF, MarJFM, MarNonl19}: the main outcome is that occurrence of constant vorticity in a flow that satisfies the three-dimensional nonlinear governing equations is possible if and only if the flow is two-dimensional and if the vorticity vector has only one non-vanishing component that points in the horizontal direction orthogonal to the direction of wave propagation.
In particular, constant vorticity gives a good description of tidal currents; cf. \cite{Per}. These are the most regular and predictable currents, and on areas of the continental shelf and in many coastal inlets they are the most significant currents; cf. \cite{Jon}. 

% Close to exhibiting constant vorticity vector is the situation which is the precursor of the upwelling/downwelling--a phenomenon giving rise to manifestations of foam streaks on lakes, cf. Langmuir \cite{Lang}, Stommel \cite{Sto}: at times longitudinal surface currents appear in the direction of wind blowing on the water surface and so, a series of alternating left and right helical vortices (with the horizontal axes parallel to the wind direction) emerges. This phenomenon is called Langmuir circulation (LC), cf. \cite{Lang}. One of its prominent features is the presence of a convergence zone between the counter-rotating vortices manifesting as lines of bubbles and surface debris aligned with the wind, cf. Moum \& Smith \cite{MSmth}. It is of interest to note that LC can form on flows that initially have constant vorticity, cf. Craik \& Leibovich \cite{CraLei}. For comprehensive discussions on LC we refer the reader to the works by Craik \& Leibovich \cite{CraLei} and Leibovich \cite{Lei}.

After introducing the governing equations we state and prove a Liouville-type result in the context of a two-layer fluid domain bounded below by a bottom boundary $z=b(y)$ (for some given function $(x,y)\rightarrow b(y)$) and above by the free surface $z=\tilde{\eta}(x,y,t)$ (for some unknown function $\tilde{\eta}$), and split by an interface $z=\eta(x,y,t)$ (for some unknown function $\eta$) into two layers. We prove that 
 if the vorticity vectors in the two layers are constant then bounded solutions to the three-dimensional water waves equations in the $\beta$-plane approximation (with the associated boundary conditions) exist if and only if one of the horizontal components of the velocity, as well as its vertical component, are zero; the other horizontal component being constant. Moreover, the interface is flat, the free surface has a traveling character in the horizontal direction of the nonvanishing velocity component, being of general type in the other horizontal direction, and the pressure is hydrostatic in both layers. \tb{We would like to underline that allowing for an extra $x$-dependence in the bottom function $b$, leads only to unbounded solutions, cf. Remark \ref{x_dep}}.

We would also like to note that, unlike previous Liouville-type results, \cite{CK, CEjmb, MarJmfm17, MarPoF, MarJFM, MarNonl19, MarLiouville, Stu, wheel, Wah}, our analysis here does not require the bottom boundary to be flat.
It is also worth to note that results similar in outcome, but for the Euler- or Navier-Stokes equations without a free boundary, were obtained recently and relatively recently, cf. \cite{GigaCmp, GigaCpde}. The ethos of the previously mentioned studies is that under integrability conditions or conditions concerning the mean oscillation of the velocity field, the latter vanishes identically, or that it displays less complexity.  

\section{The three dimensional water wave problem in the $\beta$-plane approximation}\label{threedimeuler}
We choose to work in a rotating framework with the origin at a point on the Earth's surface which is approximated by a sphere of radius $R=6378$ km
and denote with $(x,y,z)$ the Cartesian coordinates where the spatial variable $x$ refers to the longitude, the variable $y$ to latitude, and the variable $z$ stands for the local vertical, cf. Fig. \ref{syst}.
\begin{figure}[!h]
\begin{center}
\includegraphics*[width=0.75\textwidth]{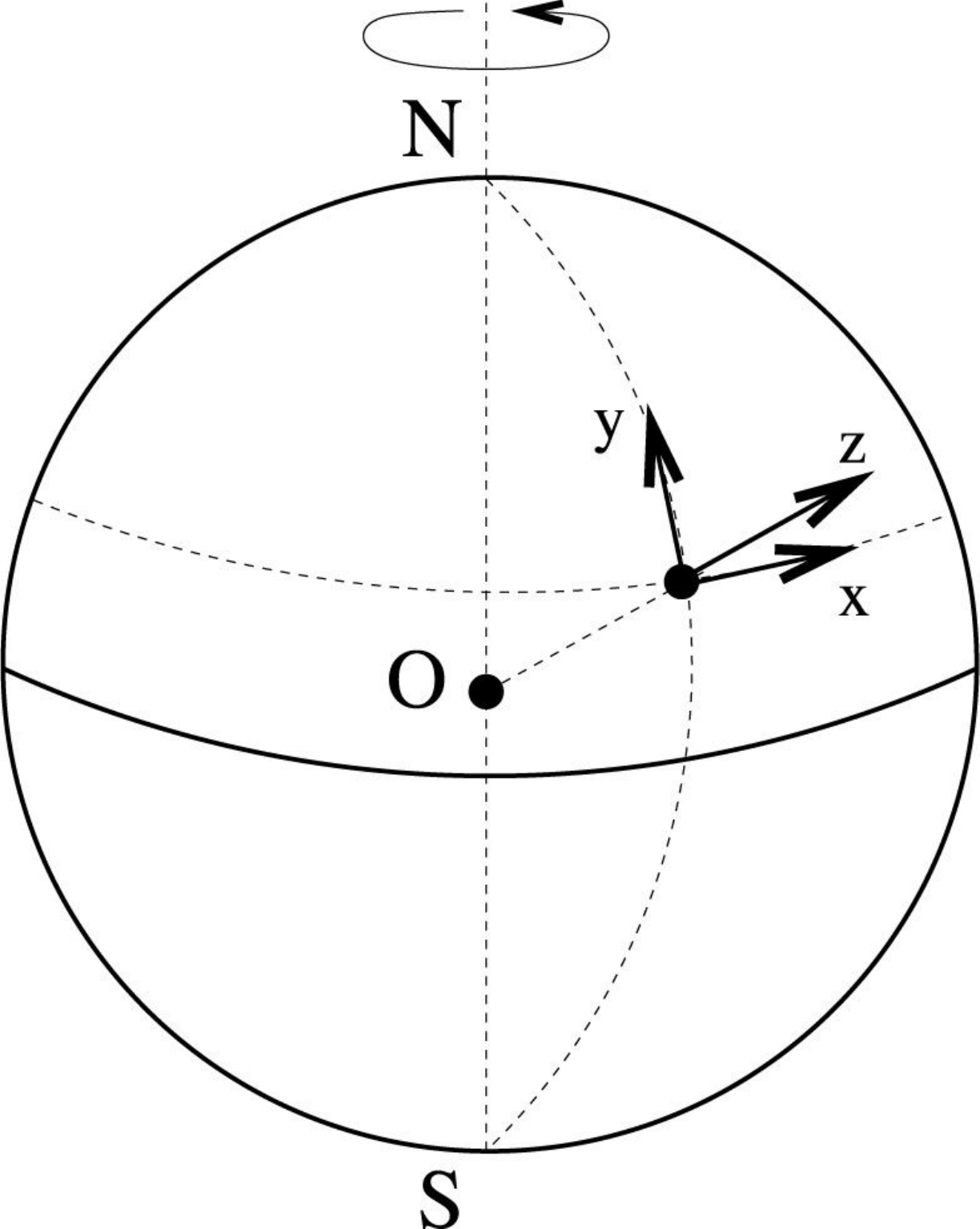}
\end{center}
\caption{The rotating frame of reference, with the x axis chosen horizontally due east, the y axis chosen horizontally due north, and the z axis chosen upward: x corresponds to longitude, y to latitude, and z to the local vertical.}
\label{syst}
\end{figure}

The fluid domain, bounded below by a bottom boundary $z=b(x,y)$ (for some differentiable function $(x,y)\rightarrow b(x,y)$) and above by the free surface $z=\tilde{\eta}(x,y,t)$ is split by an interface, denoted $z=\eta(x,y,t)$, into two layers: a layer adjacent to the bottom, written as $$D_{\eta}(t):=\{(x,y,z): b(x,y)\leq z\leq \eta(x,y,t)\},$$ which sits below a layer adjacent to the surface, written as 
$$D_{\eta,\tilde{\eta}}(t):=\{(x,y,z): \eta(x,y,t)\leq z\leq \tilde{\eta}(x,y,t)\}.$$
We denote with ${\bf u}(x,y,z,t)=(u(x,y,z,t),v(x,y,z,t),w(x,y,z,t))$ (respectively $\tilde{\mathbf{u}}(x,y,z,t)=(\tilde{u}(x,y,z,t),\tilde{v}(x,y,z,t), \tilde{w}(x,y,z,t))$) the velocity field in $D_{\eta}(t)$ (respectively in $D_{\eta,\tilde{\eta}}(t)$),
 with $P(x,y,z,t)$ (respectively $\tilde{P}(x,y,z,t)$) the pressure in $D_{\eta}(t)$ (respectively  in $D_{\eta,\tilde{\eta}}$),
and with $g$ the gravitational acceleration. We denote with $\rho$ the density in the lower layer $D_{\eta}$ 
and with $\tilde{\rho}$ the density in the upper layer $D_{\eta,\tilde{\eta}}$.
Then the motion of incompressible and inviscid three-dimensional water flows in the $\beta$-plane approximation near the Equator, (e.g. Constantin \cite{CoPhysOc2013}, Constantin \& Johnson \cite{CJaz} and Dellar \cite{Del}), is governed in 
$D_{\eta}$ by the Euler equations
\begin{equation}\label{Euler_eq}
 \begin{split}
  u_{t}+u u_{x}+v u_{y}+w u_{z}+2\omega w-\beta yv &=  -\frac{P_x}{\rho}\\
  v_{t}+u v_{x}+v v_{y}+w v_{z} +\beta y u+\omega^2 y&=-\frac{P_y}{\rho}\\
 w_{t}+u w_{x}+v w_{y}+ w w_{z}-2\omega u -\omega^2 R &= -\frac{P_z}{\rho}-g
  \end{split}
\end{equation}
and by the incopressibility condition
\\
\begin{equation}\label{mass_cons}
 u_{x}+v_{y}+w_{z}=0.
\end{equation}
Above, $t$ denotes the time variable, $\omega=7.29\cdot 10^{-5}$ rad $s^{-1}$ is the rotational speed of Earth round the polar axis toward east and $\beta:=\frac{2\omega}{R}$.
\begin{remark}
The $\beta$-plane effect, captured by the quantity $\beta y$, appears in the first and second equation in \eqref{Euler_eq}, and is a result of linearizing the Coriolis force in the tangent plane approximation. In spite of the spherical shape of the Earth, the before-mentioned linearization procedure is legitimate due to the moderate spatial scale of the motion, cf. the discussions in Cushman-Roisin \& Beckers \cite{Cush} and Constantin \cite{CoJGeoR}. We would like to note
that the $\beta$-plane equations \eqref{Euler_eq} represent a consistent approximation to the governing equations only near the Equator, cf. Dellar \cite{Del}.
\end{remark}

Likewise, in $D_{\eta,\tilde{\eta}}$ the flow motion obeys the equations
\begin{equation}\label{Euler_eq_tilde}
 \begin{split}
  \tilde{u}_{t}+\tilde{u} \tilde{u}_{x}+\tilde{v} \tilde{u}_{y}+\tilde{w} \tilde{u}_{z}+2\omega \tilde{w}-\beta y\tilde{v} &=  -\frac{\tilde{P}_x}{\tilde{\rho}}\\
  \tilde{v}_{t}+\tilde{u} \tilde{v}_{x}+\tilde{v} \tilde{v}_{y}+\tilde{w}\tilde{v}_{z}+\beta y \tilde{u}+\omega^2 y & =-\frac{\tilde{P}_y}{\tilde{\rho}}\\
 \tilde{w}_{t}+\tilde{u}\tilde{ w}_{x}+\tilde{v}\tilde{ w}_{y}+ \tilde{w}\tilde{ w}_{z}-2\omega\tilde{u}-\omega^2 R & = -\frac{\tilde{P}_z}{\tilde{\rho}}-g
  \end{split}
\end{equation}
and 
\begin{equation}\label{mass_cons_up}
\tilde{u}_x+\tilde{v}_y+\tilde{w}_z=0.
\end{equation}
We specify now the boundary conditions which conclude the formulation of the water wave problem. We start with the kinematic boundary conditions, stating the impermeability of the boundaries: on the free surface $z=\tilde{\eta}(x,y,t)$ we require 
\begin{equation}\label{kin_fs}
\tilde{w}=\tilde{\eta}_t+\tilde{u}\tilde{\eta}_x+\tilde{v}\eta_y\quad {\rm on}\quad z=\tilde{\eta}(x,y,t)
\end{equation}
On the interface $z=\eta(x,y,t)$ the conditions 
\\
\begin{equation}\label{kin_int}
\begin{split}
\tilde{w}=\eta_t+\tilde{u}\eta_x+\tilde{v}\eta_y\quad {\rm on}\quad z=\eta(x,y,t)\\
 w=\eta_t +u\eta_x+v\eta_y \quad {\rm on}\quad z=\eta(x,y,t)
\end{split}
\end{equation}
\\
while on the bed it holds that
\begin{equation}\label{kin_bed}
w=ub_x+vb_y\quad{\rm on}\quad z=b(x,y).
\end{equation}
\\
The balance of forces at the interface is encoded in the continuity of the pressure across $z=\eta(x,y,t)$, that is
\begin{equation}\label{bal_int}
P(x,y,\eta(x,y,t),t)=\tilde{P}(x,y,\eta(x,y,t),t)\,\,\textrm{for all}\,\,x,y,t.
\end{equation}
Lastly, the dynamic boundary condition, states the continuity of the pressure across the free surface, that is, we require that
\\
\begin{equation}\label{dyn_bc}
 \tilde{P}=p\left(x+\frac{\omega R}{2}t,y\right)\quad{\rm on}\quad z=\tilde{\eta} (x,y,t),
\end{equation}
\\
for some given differentiable function $(X,Y)\rightarrow p(X,Y)$.\\
The local rotation in the flow is captured by the vorticity vector, defined as the curl of the velocity field, that is,
\begin{equation}\label{vort_def}
\begin{split}
 \Omega=(w_{y}-v_{z},u_{z}-w_{x},v_{x}-u_{y})=:(\Omega_1,\Omega_2,\Omega_3)\,\,{\rm in}\,\,D_{\eta},\\
\tilde{\Omega}=(\tilde{w}_y-\tilde{v}_z, \tilde{u}_z-\tilde{w}_x,\tilde{v}_x-\tilde{u}_y)=:(\tilde{\Omega}_1,\tilde{\Omega}_2,\tilde{\Omega}_3)\,\,{\rm in}\,\, D_{\eta,\tilde{\eta}}.
\end{split}
\end{equation}
\begin{remark}
The (piecewise) constant vorticity is instrumental in describing wave-current interactions in sheared flows \cite{Jon, Ligh, Per, ThomKlop}. For a comprehensive treatment of two-dimensional water flows with discontinuous vorticity 
(from the point of view of exact solutions describing waves of small and large amplitudes) we refer the reader to the work by Constantin \& Strauss \cite{CSArma2011}. However, the landscape of rotational three-dimensional water flows is much less understood. Nevertheless, it is known that  in three-dimensional flows the constant vorticity significantly steers the dimensionality of the velocity field and of the pressure \cite{CK, CEjmb, MarJmfm17, MarPoF, MarJFM, MarNonl19, Stu, Wah}. Compared with previous studies on rotational three-dimensional water flows, we move here one step further and assume that the vorticity vectors, $\Omega$ and $\tilde{\Omega}$, respectively, are constant. As we shall see, this assumption will have massive consequences on the velocity field.
\end{remark}
\noindent We are now ready to state the main result for whose proof we will rely on rather direct partial differential equations methods, unlike more sophisticated Hamiltonian tools (and other structure-preserving methods) used recently in the context of layered domains \cite{Brid, CIM, CICMP, Wei, Wei2, Wei3}, which are not known to be available in our three-dimensional setting of the $\beta$-plane.
\begin{thm}\label{main}
If the velocity field satisfies $(u,v,w)\in L^{\infty}(D_{\eta})^3$ and $(\tilde{u},\tilde{v},\tilde{w})\in L^{\infty}(D_{\eta,\tilde{\eta}})^3$ then the flow has constant vorticity $\Omega\in D_{\eta}$ and $\tilde{\Omega}\in D_{\eta,\tilde{\eta}}$ if and only if
$$u(x,y,z,t)=u(t),\,\,v(x,y,z,t)=0,\,\,w(x,y,z,t)=w(t),\,\,{\rm for}\,\,{\rm all}\,\,x,y,z,t,$$
$$\tilde{u}(x,y,z,t)=\tilde{u}(t),\,\,\tilde{v}(x,y,z,t)=0,\,\,\tilde{w}(x,y,z,t)=\tilde{w}(t),\,\,{\rm for}\,\,{\rm all}\,\,x,y,z,t,0,$$
that is, $u,\tilde{u},w,\tilde{w}$ depend only on $t$, while $v$ and $\tilde{v}$ vanish.

\noindent If, in addition, the bottom defining function $b$ depends only on $y$, and $P$ and $\tilde{P}$ are bounded in $\Omega$, and $\tilde{\Omega}$, respectively, then
$$u=\tilde{u}=-\frac{\omega R}{2}\,\,{\rm and}\,\,v=w=\tilde{v}=\tilde{w}=0.$$ 
If, moreover, $\rho\neq \tilde{\rho}$ then $$\eta(x,y,t)=0\,\,{\rm for}\,\,{\rm all}\,\, x,y,t,$$
and 
$$\tilde{\eta}(x,y,t)=-\frac{p\left(x+\frac{\omega R}{2}t,y\right)}{\tilde{\rho}g},$$
where $p$ is the given function from \eqref{dyn_bc}.
\end{thm}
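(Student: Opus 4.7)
The plan is to prove the three conclusions of the theorem in sequence; the reverse implication of the first equivalence is immediate, since $u=u(t),\ v=0,\ w=w(t)$ yields vanishing (hence constant) vorticity and an $L^\infty$ velocity field. For the forward direction, I would begin by combining constancy of vorticity with incompressibility: differentiating the three identities $v_x-u_y=\Omega_3$, $u_z-w_x=\Omega_2$, $w_y-v_z=\Omega_1$ once more and using $u_x+v_y+w_z=0$ yields $\Delta u=\Delta v=\Delta w=0$ in $D_\eta$, and analogously in $D_{\eta,\tilde{\eta}}$. I would then form the compatibility combinations $\partial_y(\mathrm{Eq}_1)-\partial_x(\mathrm{Eq}_2)$, $\partial_z(\mathrm{Eq}_1)-\partial_x(\mathrm{Eq}_3)$ and $\partial_y(\mathrm{Eq}_3)-\partial_z(\mathrm{Eq}_2)$ of the Euler system \eqref{Euler_eq}: the pressure drops out by Schwarz, and after substituting the vorticity relations each reduces to a first-order linear identity among the spatial derivatives of $u,v,w$ with coefficients polynomial in $y$ through the $\beta y$ term. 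Since $(x,y)$ ranges over $\mathbb{R}^2$, the $L^\infty$ hypothesis on $(u,v,w)$ forces every coefficient multiplying a term that would otherwise be unbounded in $(x,y)$ to vanish, which collapses the relations enough to conclude that $v\equiv 0$ and that all spatial derivatives of $u$ and $w$ vanish. I expect this to be the main technical obstacle, since bounded harmonic functions on a slab that is unbounded in $(x,y)$ are not automatically constant, so one genuinely needs the polynomial-in-$y$ coefficients coming from the $\beta$-plane term to do the work. The same argument then applies in the upper layer.

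For the second part of the theorem, I would substitute the reduced velocity $u=u(t),\ v=0,\ w=w(t)$ back into \eqref{Euler_eq}: the three equations become $P_x=-\rho(u'(t)+2\omega w(t))$, $P_y=-\rho y(\beta u(t)+\omega^2)$ and $P_z=-\rho(w'(t)-2\omega u(t)-\omega^2 R+g)$. Integrating, $P$ is affine in $x$, quadratic in $y$, and linear in $z$, modulo a function of $t$. Assuming $P$ bounded in $D_\eta$ (and analogously for $\tilde{P}$), the $x$- and $y$-coefficients must vanish, which gives $\beta u(t)+\omega^2=0$ and $u'(t)+2\omega w(t)=0$. The first, combined with $\beta=2\omega/R$, pins $u\equiv -\omega R/2$; the second then forces $w\equiv 0$. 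The same computation in the upper layer gives $\tilde{u}=-\omega R/2$, $\tilde{w}=0$, and both pressures reduce to the hydrostatic expressions $P=-\rho g z+f(t)$, $\tilde{P}=-\tilde{\rho} g z+\tilde{f}(t)$.

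For the last conclusion I would combine the simplified velocity with the boundary conditions. The bottom condition \eqref{kin_bed} is trivially satisfied once $b=b(y)$, $v=0$ and $w=0$. Pressure continuity \eqref{bal_int} at $z=\eta$ applied to the hydrostatic forms yields $(\tilde{\rho}-\rho)g\,\eta(x,y,t)=\tilde{f}(t)-f(t)$; since $\rho\neq\tilde{\rho}$, this shows $\eta$ depends only on $t$, whereupon the interface kinematic conditions \eqref{kin_int} force $\eta_t=0$, so $\eta$ is constant and may be normalized to zero, which in turn gives $f\equiv\tilde{f}$. The dynamic boundary condition \eqref{dyn_bc} then reads $-\tilde{\rho} g\tilde{\eta}+\tilde{f}(t)=p(x+\omega R t/2,y)$, and compatibility with the free-surface kinematic condition $\tilde{\eta}_t+\tilde{u}\tilde{\eta}_x=\tilde{w}=0$, i.e. $\tilde{\eta}_t-(\omega R/2)\tilde{\eta}_x=0$, forces $\tilde{f}$ to be constant, which can be absorbed into $p$, yielding the stated formula for $\tilde{\eta}$.
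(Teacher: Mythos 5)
Your overall architecture matches the paper's: eliminate the pressure to get the vorticity equation, use constant vorticity plus incompressibility to get harmonicity, exploit the explicit $y$-dependence introduced by the $\beta$-plane term together with the $L^\infty$ hypothesis, then substitute the reduced velocities back into \eqref{Euler_eq} and integrate the pressure. Parts two and three of your argument are essentially the paper's (with two harmless variations: you obtain $w=0$ in the lower layer from the boundedness of $P_x$ rather than from the bottom condition \eqref{kin_bed}, and you carry additive functions of $t$ in the pressures, which is actually slightly more careful than the paper at the interface step).

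The genuine gap is in the step you yourself flag as the main obstacle, and the mechanism you propose for closing it does not work as stated. You cannot conclude from \eqref{vort_eq} that ``every coefficient multiplying a term that would otherwise be unbounded in $(x,y)$ vanishes'': the identity $\Omega_1 u_x+(\Omega_2+2\omega)u_y+(\Omega_3+\beta y)u_z=0$ involves \emph{derivatives} of $u$, which are not assumed bounded (the $L^\infty$ hypothesis is on the velocity, not its gradient), and even for bounded $u_z$ a product $(\Omega_3+\beta y)u_z$ can remain bounded with $u_z\not\equiv 0$ if $u_z$ decays in $y$. The paper's actual route is a second-order bootstrap that never uses boundedness of the derivatives: apply $\Delta$ to \eqref{vort_eq} and use harmonicity of $u_x,u_y,u_z$ to get $\Delta(yu_z)=2u_{yz}=0$ (and likewise $v_{yz}=w_{yz}=0$); combine with the constancy of $\Omega_1,\Omega_2$ to see $w_y$ is spatially constant; differentiate the third equation of \eqref{vort_eq} in $y$ to get $w_z=v_y$; then differentiate \eqref{mass_cons} and the first equation of \eqref{vort_eq} to obtain $u_{xz}=u_{xy}=u_{zz}=0$, and finally $u_{xx}=0$ via a case distinction on whether $\Omega_1=0$ (the two cases use different equations of \eqref{vort_eq}). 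Only once the full spatial Hessians of $u,v,w$ are shown to vanish does $L^\infty$ enter: a function affine in $(x,y,z)$ with nonzero spatial gradient is unbounded on the layer, so the gradients vanish, and then the third equation of \eqref{vort_eq} yields $\beta v=0$, i.e. $v\equiv 0$. Your sketch skips this entire chain, so as written the forward implication of the first equivalence is not established.
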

\begin{proof}
Elininating the pressure between the three equations in \eqref{Euler_eq} yields the vorticity equation 
\begin{equation}\label{vort_eq}
\begin{split}
\Omega_1 u_x+(\Omega_2+2\omega)u_y+(\Omega_3+\beta y)u_z &=0,\\
\Omega_1 v_x+(\Omega_2+2\omega)v_y+(\Omega_3+\beta y)v_z &=0,\\
\Omega_1 w_x+(\Omega_2+2\omega)w_y+(\Omega_3+\beta y)w_z &=\beta v,\\
\end{split}
\end{equation}
cf. eg. \cite{Cobook, John, MB}.
Applying the operator $\Delta$ to the first equation above and using that $u_x$, $u_y$ and $u_z$ are harmonic functions we find that $\Delta(yu_z)=0$. The latter equation can be expanded as 
$$y\Delta u_z+2u_{yz}=0$$ which leads to 
\begin{equation}\label{u_yz}
u_{yz}\equiv 0.
\end{equation}
Following the same line of proof we conclude that 
\begin{equation}\label{vw_yz}
v_{yz}(x,y,z,t)=w_{yz}(x,y,z,t)=0
\end{equation} 
at all points $(x,y,z)$ of $D_{\eta}$. Utilizing the definitions of $\Omega_1$ and $\Omega_2$ and recalling \eqref{vw_yz} we obtain
\begin{equation}\label{wy_const}
w_{yx}=w_{yy}=w_{yz}=0,
\end{equation}
which proves that $w_y$ is constant throughout $D_{\eta}$. That is, there is a differentiable function $t\rightarrow f(t)$ such $$w_y=f(t),$$ from which we infer the existence of another differentiable function $t\rightarrow g(t)$ such that 
\begin{equation}\label{v_z}
v_z=g(t).
\end{equation}
We apply now the operator of differentiation with respect to $y$ in the vorticity equation \eqref{vort_eq} and obtain, via \eqref{wy_const} that 
\begin{equation}
w_z=v_y\,\,{\rm within}\,\, D_{\eta}.
\end{equation}
We now see from above that the equalities 
\begin{equation}\label{w_zz_v_yy}
w_{zz}=v_{yz}=0\quad{\rm and}\quad v_{yy}=w_{yz}=0
\end{equation}
hold at all points of $D_{\eta}$.
Differentiating the equation of mass conservation \eqref{mass_cons} with respect to $z$ we get that $u_{xz}+v_{yz}+w_{zz}=0$, and so we have via \eqref{w_zz_v_yy} that 
\begin{equation}
u_{xz}\equiv 0,
\end{equation}
while from the differentiation of \eqref{mass_cons} with respect to $y$ we infer that 
\begin{equation}
u_{xy}\equiv 0.
\end{equation}
Differentiating with respect to $z$ in the first equation of \eqref{vort_eq} and recalling that $u_{xz}=u_{yz}=0$ we see that $(\Omega_3+\beta y)u_{zz}=$ for all $(x,y,z)$ in $D_{\eta}$. Hence, 
\begin{equation}\label{u_zz}
u_{zz}=w_{xz}=0
\end{equation}
at all points of $D_{\eta}$. We claim now that 
\begin{equation}\label{u_xx}
u_{xx}\equiv 0\quad {\rm within}\quad D_{\eta}.
\end{equation}
To prove \eqref{u_xx} we distinguish two cases as follows:
\begin{enumerate}
\item [ (i) ] $\Omega_1=0$. Then the second equation in \eqref{vort_eq} becomes 
$$(\Omega_2+2\omega)v_y +(\Omega_3+\beta y)v_z=0.$$
After a differentiation with respect to $x$ in the previous equation and accounting also for \eqref{v_z}, we obtain that 
\begin{equation}
v_{xy}=u_{yy}=0\quad {\rm within}\quad D_{\eta},
\end{equation}
so that the harmonicity of $u$ and relation \eqref{u_zz} provides the claim in \eqref{u_xx}.
\item [ (ii) ] $\Omega_1\neq 0$. In this case we differentiate the first equation of \eqref{vort_eq} with respect to $x$ and avail of $u_{xy}=u_{xz}=0$ to conclude the correctness of the claim in \eqref{u_xx}.
\end{enumerate}
We can infer now from \eqref{u_xx}, \eqref{u_zz} and the harmonicity of $u$ that 
\begin{equation}
u_{yy}=0\quad {\rm within}\quad D_{\eta}.
\end{equation}
An inspection of the previous considerations shows that $\nabla u$, $\nabla v$ and $\nabla w$ are (vectorial) functions that depend only on $t$. Corroborating the latter with the boundedness of $u,v,w$ yields that $u,v,w$ are functions that 
depend only on $t$. The latter implies now that the spatial gradients of $u,v$ and $w$ vanish identically within $D_{\eta}$. This last information yields via \eqref{vort_eq} that 
\begin{equation}\label{v=0}
v=0\quad{\rm  throughout}\quad D_{\eta}.
\end{equation}
Since $w$ is constant within $D_{\eta}$ we conclude via the bottom condition \eqref{kin_bed} that $w=0$ throughout the lower layer $D_{\eta}$.
Consequently, the Euler equations are now written as
\begin{equation}
\begin{split}
P_x&=-\rho u'(t),\\
P_y&=-\rho(\beta u(t)+\omega^2)y,\\
P_z&=\rho(2\omega u(t)-g+\omega^2 R),
\end{split}
\end{equation}
so that 
\begin{equation}
P(x,y,z,t)=-\rho \left[u'(t)x+(\beta u(t)+\omega^2)\frac{y^2}{2}+(g-2\omega u(t)-\omega^2 R )z\right].
\end{equation}
The boundedness of the pressure implies now that 
$$u(t)=-\frac{\omega^2}{\beta}=-\frac{\omega R}{2}\quad {\rm for}\,\,{\rm all}\quad t,$$
and therefore for all $x,y,t$ and all $-d\leq z\leq \eta(x,y,t)$ it holds
\begin{equation}
P(x,y,z,t)=-\rho g z.
\end{equation}
Utilizing the vorticity equation for the domain $D_{\eta,\tilde{\eta}}$ and employing an argument analogous to the one that led to the constancy of $u,v,w$ 
we conclude that $\tilde{u}$ and $\tilde{w}$ are constants of the time $t$ and $\tilde{v}$ vanishes throughout $D_{\eta,\tilde{\eta}}$. Hence, the Euler equations in the upper layer are written as
\begin{equation}
\begin{split}
\tilde{P}_x&=-\tilde{\rho} \left( 2\omega \tilde{w}(t) +\tilde{u}'(t)\right),\\
\tilde{P}_y&=-\tilde{\rho}(\beta \tilde{u}(t)+\omega^2)y,\\
\tilde{P}_z&=\tilde{\rho}(2\omega \tilde{u}(t)-g+\omega^2 R),
\end{split}
\end{equation}
which yields that the pressure in the upper layer $D_{\eta,\tilde{\eta}}$ is given as 
\begin{equation}
\tilde{P}(x,y,z,t)=-\tilde{\rho}\left[ \left( 2\omega \tilde{w}(t) +\tilde{u}'(t)\right)x+(\beta \tilde{u}(t)+\omega^2)\frac{y^2}{2}-(2\omega \tilde{u}(t)-g+\omega^2 R)z\right].
\end{equation}
Owing to the boundedness of $\tilde{P}$ we infer from the previous formula that 
\begin{equation}
\tilde{u}(t)=-\frac{\omega^2}{\beta}=-\frac{\omega R}{2}\quad {\rm and}\quad \tilde{w}(t)=0\quad{\rm for}\,\,{\rm all}\quad t,
\end{equation}
and for all $x,y,z,t$ such that $\eta(x,y,t)\leq z\leq \tilde{\eta}(x,y,t)$ it holds
\begin{equation}
\tilde{P}(x,y,z,t)=-\tilde{\rho} g z.
\end{equation}
Applying now the balance of forces at the interface we obtain (availing also of $\rho\neq \tilde{\rho}$) that $\eta(x,y,t)=0$ for all $x,y,t$. With this finding we immediately see that the kinematic conditions on the interface \eqref{kin_int}
are also verified.\\
The kinematic condition on the free surface \eqref{kin_fs} becomes $$\tilde{\eta}_t-\frac{\omega R}{2}\tilde{\eta}_x=0,$$ whose general solution is given as 
$\tilde{\eta}(x,y,t)=F\left(x+\frac{\omega R}{2}t,y\right)$ for some function $(X,Y)\mapsto F(X,Y)$. But, from the dynamic boundary condition \eqref{dyn_bc} we have that 
$F=-\frac{p}{\tilde{\rho} g}$.
\end{proof}
\noindent We conclude by revealing the reason for choosing the bottom defining function $b$ to depend only on $y$. 
\begin{remark}\label{x_dep}
Maintaining the hypotheses from Theorem \ref{main} and assuming that the bottom defining function presents a most general dependence $(x,y)\rightarrow b(x,y)$ with $b_x\neq 0$ we obtain that no solution with bounded pressure in the lower layer $D_{\eta}$ exists. Indeed, proceeding like in the proof of Theorem \ref{main} we notice that the arguments and the conclusions therein hold verbatim also in this scenario (with a more general bottom) until (and including) formula \eqref{v=0}. Then, utilizing the bottom condition $w=ub_x+vb_y$ on $z=b(x,y)$ we get $w(t)=u(t)b_x(x,y)$ for all $x,y$ and for all $t$. Assuming, {\emph ad absurdum} that there is $t_0$ such that $u(t_0)\neq 0$, we obtain from the previous equality that $b_{xx}=b_{yy}=0$. Since $b$ is bounded, it follows that $b_x(x,y)=0$ for all $x,y$, which is a contradiction with the hypothesis that $b_x\neq 0$. Thus, $u(t)=0$ for all $t$. From $w(t)=u(t)b_x$ it follows that also $w(t)=0$ for all $t$. The governing equations now yield that $P_y=-\rho \omega^2 y$ which clearly impedes the boundedness of $P$.
\end{remark}
\section{Conclusion}
We explored here the impact of (piecewise) constant vorticity on the dimension reduction of the velocity field in water flows satisfying the three-dimensional governing equations. This is part of a broader research agenda
 \cite{Bas, CEjmb, CoPhysOc2013, CoPhysOc2014, CJPhysFluids2017, MarPoF, MarJFM, MarNonl19, Stu, Wah} that aims to deepen the analytical understanding of the three-dimensional water waves equations with vorticity. 

%Note that close to exhibiting a constant vorticity vector is the phenomenon called Langmuir circulation (LC), cf. \cite{Lang}: at times longitudinal surface currents appear in the direction of wind blowing on the water surface and so, a series of alternating left and right helical vortices (with the horizontal axes parallel to the wind direction) emerges. According to Craik \& Leibovich \cite{CraLei}, LC can form on flows that initially have constant vorticity.

In addition to the previous aspects concerning the vorticity, our analysis takes into account the presence of geophysical effects in the form of the (equatorial) $\beta$-plane approximation: this procedure consists in linearizing the Coriolis force in the tangent plane approximation, this course of action being justified by the moderate scale of motion. Unlike the $f$-plane approximation, the $\beta$-plane approximation \cite{Ped}
displays the essential characteristic that the Coriolis parameter is not constant in space. This is a feature that not only makes the equations of motion more tractable but also allows the study of many phenomena in the atmosphere and ocean; in particular,  Rossby waves, the most important type of waves for large-scale atmospheric and oceanic dynamics, depend on the variation of the Coriolis parameter as a restoring force cf. \cite{Hol}.

Under the assumptions (on the vorticity) stated before, we showed that the bounded solutions have zero vertical and one horizontal velocity components, while the other horizontal component is constant. 
We have also proved that the interface is flat, the free surface has a traveling character in the horizontal direction of the nonvanishing velocity component, being of general type in the other horizontal direction, and the pressure is hydrostatic in both layers. 

Different from other scenarios investigated before \cite{CK, CEjmb, MarJmfm17, MarPoF, MarJFM, MarNonl19, MarLiouville, Stu, Wah}, we allow a varying bottom boundary. A further improvement, when compared with previous studies dealing with three-dimensional flows with piecewise constant vorticity \cite{MarLiouville}, the vorticity vectors corresponding to the upper and lower layer, respectively, need not be parallel.

Our conclusion (regarding the dimension reduction of the flow) is reinforced in the study by Xia and Francois \cite{Xia}, which shows that large-scale structures in thick fluid layers can suppress vertical eddies and reinforce the planarity of the flow. In connection with planar flows we would also like to mention the result on the geometric structure of flows by Sun \cite{Sun}.

\section{Appendix}\label{App}
To justify the consideration of the system \eqref{Euler_eq} and for the sake of self-containedness, we recall in this section a derivation of the governing equations for the $\beta$-plane, as presented by Constantin \& Johnson \cite{CJaz}. 

We start from the setting of cylindrical coordinates in which the equator is replaced by a line parallel to the $x$ axis: the equator is ``straightened'' and the body of the sphere is represented by a circular disc, which is mapped out by the corresponding polar coordinates $(x,\theta, z)$. Here, $x$ stands for the azimuthal direction and points from west to east, the direction of increasing $\theta$ is from north to south, and $z$ represents the local vertical. Then, the governing equations for water wave propagation written in cylindrical coordinates $(x, \theta, z)$ in a rotating framework are the momentum conservation equations 
\begin{equation}\label{cyl}
\begin{split}
u_t+uu_x+\frac{vu_{\theta}}{R+z}+wu_z+2\omega (w\cos\theta-v\sin\theta)&=-\frac{p_x}{\rho},\\
v_t+uv_x+\frac{vv_{\theta}}{R+z}+\frac{wv_z}{R+z}+2\omega u\sin\theta+(R+z)\omega^2\sin\theta\cos\theta&=-\frac{p_{\theta}}{\rho(R+z)},\\
w_t+uw_x+\frac{vw_{\theta}}{R+z}+ww_z-\frac{v^2}{R+z}-2\omega u\cos\theta-(R+z)\omega^2\cos^2\theta&=-\frac{p_z}{\rho}-g,
\end{split}
\end{equation}
and the equation of mass conservation 
\begin{equation}
u_x+\frac{v_{\theta}}{R+z}+\frac{\big((R+z)w\big)_z}{R+z}=0,
\end{equation}
where $u,v,w$ are the components of the velocity field corresponding to the $x$, $\theta$ and $z$ variable, respectively.

Since we are interested in the behavior of the flow close to the equator we will confine the discussion to small $\theta$.  Our considerations also take into account the fact that the radius of Earth, with respect to the depth of the oceans, is extremely large. Thus, it is justified to perform the approximations 
\begin{equation}
\sin\theta\approx \theta,\quad \frac{z}{R}\approx 0\quad.
\end{equation}
Setting $y:=R\theta$ and disregarding the terms smaller than $O(\theta)$ in the expansion of the trigonometric functions, the equations of momentum conservation \eqref{cyl} become
\begin{equation}
\begin{split}
u_t+uu_x+vu_y+wu_z+2\omega\left(w-\frac{y}{R}v\right)&=-\frac{p_x}{\rho},\\
v_t+uv_x+vv_y+wv_z+2\omega\frac{y}{R}u+\omega^2 y&=-\frac{p_y}{\rho},\\
w_t+uw_x+vw_y+ww_z-2\omega u-R\omega^2&=-\frac{p_z}{\rho}-g,
\end{split}
\end{equation}
while the equation of mass conservation acquires the form
\begin{equation}
u_x+v_y+w_z=0,
\end{equation}
which are precisely the equations \eqref{Euler_eq} and \eqref{mass_cons}.

\noindent {\bf Acknowledgements}: %The author would like to kindly thank the anonymous reviewer whose valuable comments and suggestions improved the manuscript.
The support of the Austrian Science Fund (FWF) under research grant P 33107 N is gratefully acknowledged. The author is indebted to the referees whose comments and suggestions have improved the quality of the manuscript.

\noindent {\bf Data Availability Statement}: No data is associated to this manuscript.

\end{document}